\documentclass[11pt]{amsart}
\usepackage{amsfonts, amssymb, amsmath, amsthm, color, float,enumerate}

\setlength{\topmargin}{0pt} \setlength{\headheight}{12pt}
\setlength{\headsep}{12pt}
\setlength{\textheight}{297mm} 
\setlength{\footskip}{40pt}
\addtolength{\textheight}{-2in} 
\addtolength{\textheight}{-\footskip}

\setlength{\textwidth}{210mm} 
\addtolength{\textwidth}{-2in} 
\setlength{\oddsidemargin}{5mm} 
\setlength{\evensidemargin}{-5mm} \setlength{\marginparwidth}{0pt}
\setlength{\marginparsep}{0pt}

\theoremstyle{plain}
   \newtheorem{teo}{Theorem}
   
   \newtheorem{lema}[teo]{Lemma}

\theoremstyle{definition}
   
\theoremstyle{remark}

 \newtheorem{afirmacion}{Claim}

\numberwithin{equation}{section}
\allowdisplaybreaks

\newcommand{\R}{\mathbb{R}} 
\newcommand{\Z}{\mathbb{Z}} 
\newcommand{\e}{\varepsilon} 
\newcommand{\s}[2]{\sum_{#1}^{#2}} 
\newcommand{\norm}[1]{\|#1\|} 

\begin{document}

\title[Mixed weak estimates for maximal operators]{Mixed weak estimates of Sawyer type for generalized maximal operators}

\author[F. Berra]{Fabio Berra}
\address{CONICET and Departamento de Matem\'{a}tica (FIQ-UNL),  Santa Fe, Argentina.}
\email{fberra@santafe-conicet.gov.ar}



\thanks{The author was supported by CONICET and UNL}

\subjclass[2010]{42B20, 42B25}

\keywords{Young functions, maximal operators, Muckenhoupt weights}


\begin{abstract}
We study mixed weak estimates of Sawyer type for maximal operators associated to the family of Young functions $\Phi(t)=t^r(1+\log^+t)^{\delta}$, where $r\geq 1$ and $\delta\geq 0$. More precisely, if $u$ and $v^r$ are $A_1$ weights, and $w$ is defined as $w=1/\Phi(v^{-1})$ then the following estimate
\[uw\left(\left\{x\in \R^n: \frac{M_\Phi(fv)(x)}{v(x)}>t\right\}\right)\leq C\int_{\R^n}\Phi\left(\frac{|f(x)|v(x)}{t}\right)u(x)\,dx\]
holds for every positive $t$. This extends mixed estimates to a wider class of maximal operators, since when we put $r=1$ and $\delta=0$ we recover a previous result for the classical Hardy-Littlewood maximal operator.

This inequality generalizes the result proved by Sawyer in Proc. Amer. Math. Soc. \textbf{93} (1985), no.~4, 610--614. Moreover, it includes estimates for some maximal operators related with commutators of Calder\'on-Zygmund operators.
\end{abstract}

\maketitle

\thispagestyle{empty}

\section*{Introduction}

Mixed inequalities were introduced by E. Sawyer in \cite{Sawyer}. These inequalities include two Muckenhoupt weights $u$ and $v$ and the weak type estimate that Sawyer proved in that article involves a level set of certain operator, which is related with the classical Hardy-Littlewood maximal operator. This modification implies that the most known techniques involved with classical weak type inequalities must be replaced for other ones. Concretely, Sawyer proved that if $u,v$ are weights belonging to the $A_1$-Muckenhoupt class then the inequality

\begin{equation}\label{mixta_para_M}
uv\left(\left\{x\in \R: Sf(x)>t\right\}\right)\leq \frac{C}{t}\int_{\R}|f(x)|u(x)v(x)\,dx
\end{equation}
holds for every positive $t$, where $S(f)(x)=M(fv)(x)v^{-1}(x)$.

The motivation of Sawyer of considering \eqref{mixta_para_M} was the fact that this inequality together with the Jone's factorization Theorem allow us to obtain an alternative proof of the boundedness of the Hardy-Littlewood maximal operator $M$ in $L^p(w)$, provided $w\in A_p$, $1<p<\infty$.

Later on, Cruz Uribe, Martell and P\'erez proved in \cite{CU-M-P} an extension of the Sawyer estimate on the real line to higher dimensions. They showed that if $u,v$ are weights that satisfy $u,v\in A_1$ or $u\in A_1$ and $v\in A_\infty(u)$ then the inequality

\begin{equation}\label{mixta_para_M_y_T_Rn}
uv\left(\left\{x\in \R^n: \frac{|\mathcal{T}(fv)(x)|}{v(x)}>t\right\}\right)\leq \frac{C}{t}\int_{\R^n}|f(x)|u(x)v(x)\,dx
\end{equation}
holds for every positive $t$, where $\mathcal{T}$ is either the Hardy-Littlewood maximal function or a Calder\'on-Zygmund operator. We want to point out that these authors not only extended Sawyer's estimate to higher dimensions and other operators but also included another condition on the weights. This condition is ``smoother'' than $u,v\in A_1$ since it can be shown that it implies $uv\in A_\infty$ while, in the other case, that product might be very singular.

Recently, in \cite{Li-Ombrosi-Perez}, Li, Ombrosi and P\'erez extended   these estimates to a more general context. Concretely, they proved that if $u\in A_1$ and $v\in A_\infty$ then the inequality \eqref{mixta_para_M_y_T_Rn} holds for every positive $t$.

Then, a natural question that arises is if similar estimates hold for other maximal operators, which are defined by means of certain Young functions. Particularly, we consider $L$log$L$ type functions since, it is well known, they provide maximal functions related with commutators of Calder\'on-Zygmund operators.

In \cite{bcp} the authors proved a mixed weighted inequality for such operators, but for a particular weight $v(x)=|x|^{-\beta}$ with $\beta<-n$. This means that $v$ is not even locally integrable. No assumptions were made on $u$. More specifically, they proved that if $u\geq 0$, $v$ is as above and $w=1/\Phi(v^{-1})$ then

\begin{equation*}
uw\left(\left\{x\in \R^n: \frac{M_\Phi(fv)(x)}{v(x)}>t\right\}\right)\leq C\int_{\R^n}\Phi\left(\frac{|f(x)|v(x)}{t}\right)Mu(x)\,dx
\end{equation*}
holds for every positive $t$, where $\Phi(t)=t^r(1+\log^+t)^{\delta}$, with $r\geq 1$, $\delta\geq0$ and $\log^+t=\max\{0,\log t\}$, for $t>0$.

Notice that the product $uv$ is replaced on this last estimate by $uw$. This fact suggests us that if we consider maximal operators associated with Young functions the external weight $v$ should be modified in that way. This seems to be a well extension, since if we take $r=1$ and $\delta=0$, the operator $M_\Phi$ is the Hardy-Littlewood maximal operator and $w=v$, so we recover estimate \eqref{mixta_para_M_y_T_Rn}.

Let us also point out that if we consider the operator $M_r(f)=M(f^r)^{1/r}$, $r\geq 1$ we have that

\[\left\{x: \frac{M_r(fv)(x)}{v(x)}>t\right\}=\left\{x: \frac{M((fv)^r)(x)}{v^r(x)}>t^r\right\}.\]
So if $u,v^r$ are $A_1$ weights then inequality \eqref{mixta_para_M_y_T_Rn} yields
\begin{equation*}
uv^r\left(\left\{x\in \R^n: \frac{M_r(fv)(x)}{v(x)}>t\right\}\right)\leq \frac{C}{t^r}\int_{\R^n}|f(x)|^ru(x)v^r(x)\,dx.
\end{equation*}
But this last inequality can be also written as follows
\begin{equation}\label{mixed_Mr_segundaforma}
uw\left(\left\{x\in \R^n: \frac{M_\Phi(fv)(x)}{v(x)}>t\right\}\right)\leq C\int_{\R^n}\Phi\left(\frac{|f(x)|v(x)}{t}\right)u(x)\,dx,
\end{equation}
where $w=1/\Phi(v^{-1})$ with $\Phi(t)=t^r$.

\medskip

Thus, as in the case of the operator $M$, the inequality above allow us to obtain an alternative proof of the boundedness of the operator $M_r$ in $L^p(w)$, with $r<p<\infty$ and $w$ belonging to the $A_{p/r}$-Muckenhoupt class. Indeed, given $w\in A_{p/r}$ we use the Jone's factorization Theorem to decompose it as $w=uv^{r-p}=uv^{r(1-p/r)}$, where $u,v^r\in A_1$, and consider the auxiliar operator $S(f)(x)=M_r(fv)(x)v^{-1}(x)$. Then, \eqref{mixed_Mr_segundaforma} is the $(r,r)$-weak type inequality of the operator $S$ with respect to the measure $d\mu=uv^r\,dx$. Also, it is not difficult to see that $S$ is bounded in $L^\infty(uv^r)$. Then, by using the Marcinkiewicz's interpolation theorem we get the boundedness of $S$ in $L^p(uv^r)$, $r<p<\infty$ and thus

\begin{align*}
\int_{\R^n}M_r(f)(x)^pw(x)\,dx&=\int_{\R^n}M_r(f)^p(x)u(x)v^{r-p}(x)\,dx\\
&=\int_{\R^n}S(fv^{-1})^p(x)u(x)v^r(x)\,dx\\
&\leq C\int_{\R^n}|f(x)|^pu(x)v^{r-p}(x)\,dx\\
&=C\int_{\R^n}|f(x)|^pw(x)\,dx.
\end{align*}

\medskip

In this paper we shall consider a wider class of maximal operators that includes the operators $M_r$, $r\geq 1$ and we prove that they satisfy an analogous inequality, under the same condition on the weights, i.e., $u,v^r\in A_1$. Concretely, we will prove the following theorem.
\begin{teo}\label{teo_main}
Let $r\geq 1$, $\delta\geq0$ and $\Phi(t)=t^r(1+\log^+t)^\delta$. If $u,v^r$ are $A_1$ weights and $w=1/\Phi(v^{-1})$, then there exists a positive constant $C$ such that the inequality
\begin{equation}\label{eq_teo_main}
uw\left(\left\{x\in \R^n: \frac{M_\Phi(fv)(x)}{v(x)}>t\right\}\right)\leq C\int_{\R^n}\Phi\left(\frac{|f(x)|v(x)}{t}\right)u(x)\,dx
\end{equation}
holds for every positive $t$ and every bounded function $f$ with compact support.
\end{teo}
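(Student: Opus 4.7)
By the positive $1$-homogeneity of the Luxemburg norm, $\|cf\|_{\Phi,Q}=c\|f\|_{\Phi,Q}$, which gives $M_\Phi(cf)=c\,M_\Phi(f)$; replacing $f$ by $f/t$ we reduce to the case $t=1$. Writing $g=fv$, the task becomes to prove
\[
uw\bigl\{x\in\R^n:\,M_\Phi g(x) > v(x)\bigr\} \leq C\int_{\R^n}\Phi(|g|)\,u\,dx.
\]
Since $f$ is bounded with compact support and $v\in L^r_{\mathrm{loc}}$ (because $v^r\in A_1$), $M_\Phi g$ is lower semicontinuous and the level set $\Omega=\{M_\Phi g>v\}$ is open.

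Next I would apply a Whitney decomposition $\Omega=\bigsqcup_j Q_j$ with $3Q_j\cap\Omega^c\neq\emptyset$ and, for each $j$, select $x_j\in 3Q_j\cap\Omega^c$. For $x\in Q_j$, every cube $P\ni x$ meeting $(5Q_j)^c$ must be large enough to contain $x_j$, so
\[
M_\Phi\bigl(g\chi_{(5Q_j)^c}\bigr)(x)\leq M_\Phi g(x_j) \leq v(x_j).
\]
Using $v^r\in A_1$ on the enlarged Whitney cube, an averaged reverse-H\"older argument gives $v(x_j)^r$ comparable to $v(x)^r$ on a set of comparable $uw$-measure, so the tail contribution can be absorbed. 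This leaves the local problem: for each $j$, estimate $uw\{x\in Q_j:\,M_\Phi(g\chi_{5Q_j})(x)>c\,v(x)\}$.

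For the local piece I would apply a Calder\'on--Zygmund decomposition of $g$ inside $5Q_j$ and use that $u\in A_1$, which makes $u$ essentially constant on $Q_j$, so averages with respect to $u$ behave like Lebesgue averages. The crucial algebraic input is the defining identity $w=1/\Phi(1/v)$, which implies the pointwise relation $w(x)\,\Phi(v(x))\sim v(x)^r$. This is exactly what is needed to convert the level-set bound $M_\Phi(g\chi_{5Q_j})>c\,v$ into an $L^\Phi$-type bound of the form $\int_{5Q_j}\Phi(|g|)u\,dx$; summing over $j$ using the bounded overlap of $\{5Q_j\}$ yields the global estimate.

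The main obstacle I expect is the interaction of the logarithmic factor $(1+\log^+ t)^\delta$ in $\Phi$ with the external weight $v$. Since the Luxemburg norm does not factorize through multiplication by $v$, one cannot just bound $\|gv\|_{\Phi,Q}$ by $\|g\|_{\Phi,Q}\cdot\|v\|_{L^\infty(Q)}$ or similar. Instead one needs the generalized H\"older inequality in Orlicz spaces with the complementary Young function of $\Phi$, together with the reverse-H\"older property of $v^r\in A_1$, to show that the logarithmic contribution picked up on the left is exactly absorbed by the logarithmic correction built into $w$. Making this cancellation quantitative, so that the remaining integrand matches $\Phi(|g|)u$ on the right, is the technical core of the proof; all earlier steps (Whitney, tail absorption, local CZ) must be arranged so that this balancing is legitimate.
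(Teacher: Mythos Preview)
Your plan has a genuine structural gap that prevents it from closing.

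First, the set $\Omega=\{M_\Phi g>v\}$ is \emph{not} open: lower semicontinuity of $M_\Phi g$ gives openness only when the threshold is a constant, not a measurable function $v$. So the Whitney decomposition you invoke is not available as stated. Even if you repair this (e.g.\ by working with a dyadic grid and cubes on which some averaged condition holds), the information you extract from a ``complement point'' $x_j\in 3Q_j\cap\Omega^c$ is only $M_\Phi g(x_j)\le v(x_j)$, with $v(x_j)$ uncontrolled; the $A_1$ condition on $v^r$ gives one-sided control (averages $\lesssim$ infimum), not the two-sided pointwise comparability you would need to ``absorb the tail''. After your local/tail split, the local problem $uw\{x\in Q_j:\,M_\Phi(g\chi_{5Q_j})(x)>c\,v(x)\}$ is essentially the \emph{same} mixed problem you started with, just restricted to a cube---there is no visible induction, no gain, and no mechanism to sum over~$j$ without circularity.

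What the paper actually does is different in kind. The key idea (going back to Sawyer) is to slice simultaneously in the level of $v$ \emph{and} the level of $M_\Phi g$: one considers
\[
\Omega_k=\{M_{\mathcal D}v>a^k\}\cap\{M_{\Phi,\mathcal D}g>a^k\},
\]
decomposes each $\Omega_k$ into maximal dyadic cubes $Q_j^k$, and notes that $\{M_{\Phi,\mathcal D}g>v\}\cap\{a^k<v\le a^{k+1}\}$ sits inside the union of those $Q_j^k$ with $(k,j)\in\Gamma$. This converts the variable threshold $v$ into a family of constant thresholds $a^k$, at the price of a double sum $\sum_{(k,j)\in\Gamma}b_k\,u(Q_j^k)$. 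The heart of the proof is then a \emph{principal cube} stopping-time selection (an increasing chain of ``bad'' averages of $u$ weighted by $b_k a^{-\alpha r k}$), together with the $A_\infty$ estimate of Lemma~\ref{estimacion_lema_M-W} applied to $v^r$, to prove geometric decay in $k$. The logarithmic factor in $\Phi$ is handled not via Orlicz duality but through submultiplicativity of $\Phi$, the two-sided bound $a^r\le b_{k+1}/b_k\le\Phi(a)$, and the elementary comparison $\Phi(t)\le C t^{r+\varepsilon}$; no complementary Young function enters.

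In short: your Whitney/CZ outline treats $v$ as if it were locally constant and never confronts the variable-threshold nature of the level set, which is exactly the difficulty Sawyer's method is designed to overcome.
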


	By virtue of the extension of \eqref{mixta_para_M_y_T_Rn} in \cite{Li-Ombrosi-Perez} to the case $u\in A_1$ and $v\in A_\infty$, we conjecture that Theorem~\ref{teo_main} should still hold for the case $u\in A_1$ and $v^r\in A_\infty$. If the conjecture is true, we could get the result in \cite{Li-Ombrosi-Perez} for the Hardy-Littlewood maximal function by taking $\Phi(t)=t$.
	
The remainder of the paper is organized as follows: in $\S$1 we give the preliminaries and definitions. In $\S$2 we prove certain lemmas we shall use in the proof of the main result, which is contained in $\S$3.

\section{Preliminaries and basic results}\label{seccion_preliminares}

Recall that a \emph{weight} $w$ is a function that is locally integrable, positive and finite in almost every $x$. Given $1<p<\infty$ we say that $w\in A_p$ if there exists a positive constant $C$ such that
\[\left(\frac{1}{|Q|}\int_Qw\,dx\right)\left(\frac{1}{|Q|}\int_Qw^{1-p'}\,dx\right)^{p-1}\leq C,\]
for every cube $Q\subseteq \R^n$. By a cube $Q$ we understand a cube in $\R^n$ with sides parallel to the coordinate axes.
For $p=1$ we say that $w\in A_1$ if there exists a positive constant $C$ such that for every cube $Q$
\[\frac{1}{|Q|}\int_Qw\,dx\leq C\inf_Qw.\]

The smallest constant $C$ for which the inequalities above hold is denoted by $[w]_{A_p}$ and called the $A_p$ constant of $w$.

Finally, the $A_\infty$ class is defined as the collection of all the $A_p$ classes, that is, $A_\infty=\bigcup_{p\geq 1}A_p$. It is well known that $A_p$ classes are increasing on $p$, that is, if $p\leq q$ then $A_p\subseteq A_q$.  For more details and other properties of weights see \cite{javi} or \cite{grafakos}.

There are many conditions that characterize the set $A_\infty$. In this paper we will use the following one: we say that $w\in A_\infty$ if there exist positive constants $C$ and $\e$ such that, for every cube $Q\subseteq \R^n$ and every measurable set $E\subseteq Q$ the condition
\[\frac{w(E)}{w(Q)}\leq C\left(\frac{|E|}{|Q|}\right)^{\e}\]
holds, where $w(E)=\int_E w$. With this characterization we obtain the following result, which was previously proved in \cite{M-W-Maximal-Hilbert} for $n=1$. For the sake of completeness we include the proof.

\begin{lema}\label{estimacion_lema_M-W}
Let $w\in A_\infty$, $\lambda>0$ and $Q$ a cube of $\R^n$. Then, there exist positive constants $C_0$ and $\xi$ such that
\[|\{w(x)>\lambda\}\cap Q|\leq C_0|Q|\left[\frac{1}{\lambda|Q|}\int_Q w(x)\,dx\right]^{1+\xi}.\]
\end{lema}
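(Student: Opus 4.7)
The plan is to reduce the desired estimate to a combination of Chebyshev's inequality on the set $E=\{x\in Q: w(x)>\lambda\}$ together with the $A_\infty$ characterization recalled just before the lemma statement. The idea is that $E$ is a subset of $Q$ on which $w$ is pointwise large, so its measure must be controlled by the average of $w$ on $Q$; the quantitative improvement with exponent $1+\xi>1$ comes precisely from the power gain $(|E|/|Q|)^\e$ provided by $A_\infty$.

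First I would set $E=\{x\in Q:w(x)>\lambda\}$ and note the trivial Chebyshev bound
\[
\lambda|E|\leq \int_E w(x)\,dx=w(E).
\]
Next I would invoke the $A_\infty$ condition for $w$: there exist constants $C,\e>0$ such that
\[
w(E)\leq C\,w(Q)\left(\frac{|E|}{|Q|}\right)^{\e}.
\]
Combining these two inequalities gives
\[
\lambda |E|\leq C\,w(Q)\left(\frac{|E|}{|Q|}\right)^{\e},
\]
and solving for $|E|$ yields
\[
|E|^{1-\e}\leq \frac{C\,w(Q)}{\lambda\,|Q|^{\e}}\cdot|Q|^{0},
\]
so that, raising to the power $1/(1-\e)$,
\[
|E|\leq C^{\frac{1}{1-\e}}\,\frac{w(Q)^{\frac{1}{1-\e}}}{\lambda^{\frac{1}{1-\e}}\,|Q|^{\frac{\e}{1-\e}}}.
\]

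Finally, setting $\xi=\e/(1-\e)$ (so that $1/(1-\e)=1+\xi$) and rearranging, the right-hand side becomes
\[
C^{1+\xi}|Q|\left[\frac{w(Q)}{\lambda|Q|}\right]^{1+\xi},
\]
which gives the claim with $C_0=C^{1+\xi}$. There is no real obstacle here: the only subtlety is the bookkeeping of exponents to verify that the factor $|Q|^{-\e/(1-\e)}$ combines with $w(Q)^{1+\xi}\lambda^{-(1+\xi)}$ to produce exactly $|Q|(w(Q)/(\lambda|Q|))^{1+\xi}$, which is immediate once one writes $\xi=\e/(1-\e)$.
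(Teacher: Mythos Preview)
Your proof is correct and follows essentially the same approach as the paper: Chebyshev's inequality on $E=\{x\in Q:w(x)>\lambda\}$ combined with the $A_\infty$ bound $w(E)\leq Cw(Q)(|E|/|Q|)^{\e}$, then solving for $|E|$ and identifying $\xi=\e/(1-\e)$ and $C_0=C^{1/(1-\e)}$. The only cosmetic difference is the order in which the algebra is written; the constants and exponents you obtain are identical to those in the paper.
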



\begin{proof}
Since $w\in A_\infty$, there exist positive constants $C$ and $\e$ such that for every cube $Q$ and every measurable set $E\subseteq Q$
\begin{equation}\label{condicion_A_infinito}
\frac{w(E)}{w(Q)}\leq C\left(\frac{|E|}{|Q|}\right)^{\e}.
\end{equation}
Let $E_\lambda=\{w(x)>\lambda\}\cap Q$. Thus $|E_\lambda|\leq\frac{1}{\lambda}\int_{E_{\lambda}}w(x)\,dx$. By \eqref{condicion_A_infinito} we obtain
\[|E_{\lambda}|\leq \frac{1}{\lambda}w(E_\lambda)\leq \frac{C}{\lambda}\left(\frac{|E_\lambda|}{|Q|}\right)^\e w(Q),\]
which implies that
\[|E_\lambda|\leq |Q|\left(\frac{C}{\lambda|Q|}w(Q)\right)^{1/(1-\e)}.\]
Taking $\xi=1/(1-\e)-1$ and $C_0=C^{1/(1-\e)}$ the desired estimate follows.
\end{proof}

\begin{lema}\label{relacion_phi_con_potencia}
If $\Phi(t)=t^r(1+\log^+t)^{\delta}$, for $t\geq 1$ and $\e>0$ we have that
\[\Phi(t)\leq Ct^{r+\e},\]
with $C=\max\{(\delta/\e)^{\delta},1\}$.
\end{lema}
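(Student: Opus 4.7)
My proof plan would be to reduce the stated inequality to the one-variable optimization $(1+\log t)^\delta \le C\,t^{\e}$ on $[1,\infty)$. Dividing the desired estimate $\Phi(t)\le Ct^{r+\e}$ through by $t^r$ and using $\log^+t=\log t$ for $t\ge 1$, this is what remains to be shown. The trivial case $\delta=0$ (where $(1+\log^+t)^\delta\equiv 1$ and one can take $C=1$) is immediate, so I may assume $\delta>0$.

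Set $g(t)=(1+\log t)^\delta t^{-\e}$ on $[1,\infty)$. I will locate the maximum of $g$ by differentiating its logarithm:
\[
(\log g)'(t)=\frac{1}{t}\Bigl(\frac{\delta}{1+\log t}-\e\Bigr).
\]
The unique zero occurs where $1+\log t=\delta/\e$, i.e., at $t_0=e^{\delta/\e-1}$. Note that $g(1)=1$ and $g(t)\to 0$ as $t\to\infty$, since any positive power of $t$ dominates the logarithmic factor. Hence the supremum of $g$ on $[1,\infty)$ is attained either at $t=1$ or at $t_0$, according to whether $t_0$ lies in $[1,\infty)$.

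I then split into two cases. If $\delta\le\e$, then $t_0\le 1$, so $(\log g)'\le 0$ on $[1,\infty)$, the function $g$ is non-increasing there, and $\sup g=g(1)=1$; in this regime $(\delta/\e)^\delta\le 1$, so $C=\max\{(\delta/\e)^\delta,1\}=1$ works. If $\delta>\e$, then $t_0>1$ is an interior maximum, and a direct substitution gives $g(t_0)=(\delta/\e)^\delta\,e^{\e-\delta}$; since $\e-\delta<0$, this is bounded above by $(\delta/\e)^\delta$, matching the value of $C$ in this range. In either situation $g(t)\le \max\{(\delta/\e)^\delta,1\}$ for all $t\ge 1$, which is precisely the desired inequality.

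There is no substantive obstacle: the argument is a textbook one-variable calculus optimization. The only modest point is the bookkeeping of the two regimes $\delta\le\e$ versus $\delta>\e$, so that the supremum in each case is visibly dominated by the single uniform constant $\max\{(\delta/\e)^\delta,1\}$ stated in the lemma.
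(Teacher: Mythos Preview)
Your proof is correct and follows essentially the same approach as the paper: both reduce to the single-variable inequality $(1+\log t)^{\delta}\le C\,t^{\e}$ on $[1,\infty)$ and handle it by elementary calculus. The only mechanical difference is that the paper takes $\delta$-th roots and shows $f(t)=C^{1/\delta}t^{\e/\delta}-\log t$ is increasing with $f(1)\ge 1$, whereas you maximize the ratio $g(t)=(1+\log t)^{\delta}t^{-\e}$ directly; neither argument offers a genuine advantage over the other.
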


\begin{proof}
The proof of the inequality $t^r(1+\log^+t)^{\delta}\leq Ct^{r+\e}$ for $t\geq 1$ can be achieved by showing that $1\leq C^{1/\delta}t^{\e/\delta}-\log t$ holds for every $t\geq 1$. Let $f(t)=C^{1/\delta}t^{\e/\delta}-\log t$ and note that $f(1)=C^{1/\delta}\geq 1$. Then, for $t>1$,
\[f'(t)=\frac{\e}{\delta}C^{1/\delta}\frac{t^{\e/\delta}}{t}-\frac{1}{t}\]
and $f'(t)>0$ if and only if $t>(\delta/\e)^{\delta/\e}C^{-1/\e}$ and this inequality is always true because of the definition of $C$. So $f$ is an increasing function and the inequality above holds.
\end{proof}

\bigskip

Given a locally integrable function $f$, the \emph{Hardy-Littlewood maximal operator} is defined by
\[Mf(x)=\sup_{Q\ni x}\frac{1}{|Q|}\int_Q |f(y)|\,dy.\]

We say that $\varphi:[0,\infty)\to[0,\infty]$ is a \emph{Young function} if it is convex, increasing, $\varphi(0)=0$ and $\varphi(t)\to\infty$ when $t\to\infty$. Given a Young function $\varphi$, the maximal operator $M_\varphi$ is defined, for $f\in L^1_{\textit{loc}}$, by
\[M_\varphi f(x)=\sup_{Q\ni x}\norm{f}_{\varphi,Q},\]
where $\norm{f}_{\varphi,Q}$ denotes the \emph{average of Luxemburg type} of the function $f$ in the cube $Q$, which is defined as follows
\[\norm{f}_{\varphi,Q}=\inf\left\{\lambda>0 : \frac{1}{|Q|}\int_Q\varphi\left(\frac{|f(y)|}{\lambda}\right)\,dy\leq 1 \right\}.\]

\bigskip

By a \emph{dyadic grid} $\mathcal{D}$ we will understand a collection of cubes of $\R^n$ that satisfies the following properties
\begin{enumerate}
\item all cube $Q$ in $\mathcal{D}$ has side length $2^k$, for some $k\in\Z$,
\item if $P\cap Q\neq\emptyset$ then $P\subseteq Q$ or $Q\subseteq P$,
\item $\mathcal{D}_k=\{Q\in\mathcal{D}: \ell(Q)=2^k\}$ is a partition of $\R^n$ for every $k\in\Z$, where $\ell(Q)$ denotes the length of each side of $Q$.
\end{enumerate}

To a given dyadic grid $\mathcal{D}$ we can associate the corresponding maximal operator $M_{\varphi,\mathcal{D}}$ defined similarly as above, where the supremum is taken over all cube in $\mathcal{D}$. When $\varphi(t)=t$, we will simply denote  this operator with $M_{\mathcal{D}}$.

\medskip

The next result will be useful in our estimates. A proof can be found in \cite{Okikiolu}.

\begin{teo}\label{teo_control_diadico}
There exist dyadic grids $\mathcal{D}^{(i)}$, $1\leq i\leq 3^n$ such that for every cube $Q\subseteq\R^n$ there exist $i$ and $Q_0\in \mathcal{D}^{(i)}$ such that $Q\subseteq Q_0$ and $\ell(Q_0)\leq 3\ell(Q)$.
\end{teo}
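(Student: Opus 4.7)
My plan is to construct the $3^n$ grids explicitly by a variant of the so-called \emph{one-third trick}. For each multi-index $\alpha \in \{0, 1/3, 2/3\}^n$, giving $3^n$ choices that I would enumerate as $\mathcal{D}^{(i)}$, $1 \leq i \leq 3^n$, I set
\[\mathcal{D}^\alpha = \{2^k([0,1)^n + m + (-1)^k\alpha) : k \in \Z,\ m \in \Z^n\}.\]
The scale-dependent sign $(-1)^k$ is crucial: passing from a cube at level $k$ to its $2^n$ children at level $k-1$ flips this sign, and compatibility of the children with the defining shift at the new level reduces to the requirement $3\alpha \in \Z^n$, which is exactly what $\alpha \in \{0,1/3,2/3\}^n$ encodes. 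Once this observation is in hand, the three defining properties of a dyadic grid (common side length $2^k$, partition at each scale, and nesting) follow from short direct calculations.

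The heart of the argument is the inclusion property in dimension one. Given an interval $I = [a, a+s)$, I would choose $k \in \Z$ so that $2^k \in [3s/2, 3s]$; such a $k$ exists because this window has ratio $2$ and therefore always contains a power of two. A direct calculation shows that the union of left endpoints of level-$k$ intervals across the three one-dimensional grids $\mathcal{D}^0$, $\mathcal{D}^{1/3}$, $\mathcal{D}^{2/3}$ is exactly $\{j \cdot 2^k/3 : j \in \Z\}$, regardless of the parity of $k$, so consecutive points of this lattice are separated by $2^k/3$. For $I$ to fit inside a level-$k$ grid interval $[L, L+2^k)$ we need $L \in [a+s-2^k, a]$, a closed window of length $2^k - s$, which is at least $2^k/3$ by the choice of $k$. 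A closed interval whose length is at least the lattice spacing must meet the lattice, so some shift $\alpha \in \{0, 1/3, 2/3\}$ yields an admissible $L$; the resulting interval has length $2^k \leq 3s$.

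For general $n$, I would apply the one-dimensional argument coordinatewise at the \emph{same} level $k$. Since $Q$ is a cube, every edge shares the common length $s = \ell(Q)$, so one $k$ works for all coordinates; each coordinate then picks its own $\alpha_j \in \{0,1/3,2/3\}$, and the product grid $\mathcal{D}^\alpha$ with $\alpha = (\alpha_1, \ldots, \alpha_n)$ contains a cube $Q_0$ of side $2^k \leq 3s$ enclosing $Q$. I expect the main subtlety to be the precise calibration of $k$ so that the constant is exactly $3$: too small a $k$ and $I$ may fail to fit in any of the three shifted grids, while too large a $k$ breaks the estimate $\ell(Q_0) \leq 3\ell(Q)$. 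The window $2^k \in [3s/2, 3s]$ is exactly where both constraints are simultaneously satisfiable, and it is this tight balance that delivers the factor $3$ in the conclusion.
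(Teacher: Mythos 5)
The paper itself gives no proof of this statement (it simply cites Okikiolu), and your argument is the standard one-third trick that underlies the cited result; it is correct. The grid construction with the alternating shift $(-1)^k\alpha$, $3\alpha\in\Z^n$, the identification of the level-$k$ left endpoints of the three one-dimensional grids with the lattice $(2^k/3)\Z$, and the calibration $2^k\in[3s/2,3s]$ (which makes the window of admissible left endpoints at least as long as the lattice spacing) all check out, so your proposal is a sound, self-contained substitute for the citation.
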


With this result in mind, it will be sufficient to prove Theorem~\ref{teo_main} for $M_{\Phi,\mathcal{D}}$, for a general dyadic grid $\mathcal{D}$, since
\[M_{\Phi}f(x)\leq C\s{i=1}{3^n}M_{\Phi,\mathcal{D}^{(i)}}f(x).\]

Indeed, fix $x\in\R^n$ and $Q$ a cube containing $x$. By Theorem~\ref{teo_control_diadico} we have a dyadic grid $\mathcal{D}^{(i)}$ and $Q_0\in \mathcal{D}^{(i)}$ with the properties above. Then,
\[\frac{1}{|Q|}\int_Q \Phi\left(\frac{|f(y)|}{\norm{f}_{\Phi,Q_0}}\right)\,dy \leq\frac{|Q_0|}{|Q|}\frac{1}{|Q_0|}\int_{Q_0}\Phi\left(\frac{|f(y)|}{\norm{f}_{\Phi,Q_0}}\right)\,dy
\leq 3^n,\]
so
\[\norm{f}_{\Phi,Q}\leq 3^n\norm{f}_{\Phi,Q_0}\leq 3^n M_{\Phi, \mathcal{D}^{(i)}}(f)(x)\leq 3^n\s{i=1}{3^n}M_{\Phi, \mathcal{D}^{(i)}}(f)(x).\]
Thus, by taking supremum over all cubes $Q$ that contain $x$ we have the desired estimate.

\section{Previous lemmas}

In this section we will state and prove some lemmas that will be useful in the proof of our main result.

\begin{lema}\label{descomposicion_de_CZ_del_espacio}
Given $\lambda>0$, a bounded function with compact support  $f$, a dyadic grid $\mathcal{D}$ and a Young function $\varphi$, there exists a family of dyadic cubes $\{Q_j\}_j$ of $\mathcal{D}$ that satisfies
\[\{x\in\R^n: M_{\varphi,\mathcal{D}}f(x)>\lambda\}=\bigcup_j Q_j,\]
and $\norm{f}_{\varphi,Q_j}>\lambda$  for every $j$.
\end{lema}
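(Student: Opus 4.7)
The plan is to perform the standard Calder\'on--Zygmund stopping-time construction adapted to the dyadic grid $\mathcal{D}$ and the Luxemburg average. First I set $\Omega_\lambda := \{x\in\R^n : M_{\varphi,\mathcal{D}}f(x)>\lambda\}$ and collect all ``bad'' cubes, defining $\mathcal{F}:=\{Q\in\mathcal{D} : \norm{f}_{\varphi,Q}>\lambda\}$. By the very definition of $M_{\varphi,\mathcal{D}}$, a point $x$ lies in $\Omega_\lambda$ if and only if some dyadic cube $Q\in\mathcal{D}$ with $x\in Q$ belongs to $\mathcal{F}$, so $\Omega_\lambda=\bigcup_{Q\in\mathcal{F}}Q$.

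The key step is to bound the side lengths of cubes in $\mathcal{F}$ uniformly, which will then let me extract maximal elements. Since $f$ is bounded with compact support, pick $M>0$ and a bounded measurable set $E$ with $|f|\leq M$ and $\supp f \subseteq E$. For every dyadic cube $Q$,
\[\frac{1}{|Q|}\int_Q\varphi\!\left(\frac{|f(y)|}{\lambda}\right)dy \leq \frac{|E|}{|Q|}\,\varphi\!\left(\frac{M}{\lambda}\right),\]
and the right-hand side is $\leq 1$ as soon as $|Q|\geq |E|\,\varphi(M/\lambda)$. By the definition of the Luxemburg norm, such a $Q$ satisfies $\norm{f}_{\varphi,Q}\leq \lambda$ and hence cannot belong to $\mathcal{F}$. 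Consequently every $Q\in\mathcal{F}$ has $|Q|<|E|\,\varphi(M/\lambda)$, yielding a uniform upper bound on side lengths.

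With this size bound in hand, each $Q\in\mathcal{F}$ has only finitely many dyadic ancestors, so its chain of ancestors within $\mathcal{F}$ terminates in a unique largest element. Let $\{Q_j\}$ be the family of all such maximal cubes in $\mathcal{F}$; two distinct maximal dyadic cubes are automatically disjoint by the nesting property of $\mathcal{D}$. By maximality every $Q\in\mathcal{F}$ is contained in some $Q_j$, so $\bigcup_j Q_j = \bigcup_{Q\in\mathcal{F}}Q = \Omega_\lambda$, while $\norm{f}_{\varphi,Q_j}>\lambda$ for each $j$ by membership in $\mathcal{F}$. I expect the size-bound paragraph to be the only step that genuinely uses the hypotheses on $f$ and $\varphi$; the maximal-cube extraction itself is purely combinatorial.
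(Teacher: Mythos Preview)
Your argument is correct and is essentially the paper's construction: both select the maximal dyadic cubes $Q$ with $\norm{f}_{\varphi,Q}>\lambda$, and both use the boundedness and compact support of $f$ to show that $\norm{f}_{\varphi,Q}\to 0$ as $|Q|\to\infty$, which guarantees that such maximal cubes exist (the paper phrases the selection via the sets $\Lambda_k=\{x:E_kf(x)>\lambda,\ E_jf(x)\le\lambda\text{ for }j>k\}$, which simply groups your maximal cubes by scale). One small wording slip: every dyadic cube has infinitely many ancestors, so what you mean---and what the size bound actually gives---is that only finitely many ancestors of a given $Q\in\mathcal F$ can themselves lie in $\mathcal F$.
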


\begin{proof}
Let $\lambda>0$ be fixed. For $k\in \Z$ we define $E_kf(x)=\s{Q\in \mathcal{D}_k}{}\norm{f}_{\varphi,Q}\mathcal{X}_Q(x)$. With this definition, we can write
$M_{\varphi,\mathcal{D}}f(x)=\sup_k E_kf(x)$. Next, we consider the sets
\[\Lambda_k=\{x\in\R^n: E_kf(x)>\lambda \textrm{ and } E_jf(x)\leq \lambda \textrm{ if } j>k \}.\]
Then, we have that $\{x\in\R^n: M_{\varphi,\mathcal{D}}f(x)>\lambda\}=\bigcup_k \Lambda_k$. Indeed, if $x\in \{M_{\varphi,\mathcal{D}}f(x)>\lambda\}$ there exist $k\in \Z$ and $Q\in\mathcal{D}_k$ such that $\norm{f}_{\varphi,Q}>\lambda$ and this means that $E_kf(x)>\lambda$. Notice that $E_kf(x)\to 0$ when $k\to \infty$ since the Luxemburg norm $\norm{f}_{\Phi,Q}$ tends to zero when $|Q|\to\infty$, because $f$ is bounded and has compact support.


Then, there exists $k_0\in\Z$ such that if $j>k_0$, $E_jf(x)\leq \lambda$. Now, if for every $i: k<i\leq k_0$ we have $E_if(x)\leq \lambda$ then $x\in \Lambda_k$. If not, let $i_0$ the biggest integer less or equal than $k_0$ for which $E_{i_0}f(x)>\lambda$. In this case, $x\in \Lambda_{i_0}$. Conversely, if $x\in \bigcup_k \Lambda_k$ there exists $k_0$ such that $x\in\Lambda_{k_0}$ and this means that $E_{k_0}f(x)>\lambda$ which yields $M_{\varphi,\mathcal{D}}f(x)>\lambda$.

Finally, observe that every set $\Lambda_k$ can be written as a union of cubes of $\mathcal{D}_k$ with the desired property since, for a fixed $x\in\Lambda_k$, we have $y\in \Lambda_k$ for all $y\in Q(k)$, where $Q(k)$ is the cube in $\mathcal{D}_k$ that contains $x$.
\end{proof}

Notice that the way we build the sets $\Lambda_k$ ensures us that the cubes $Q_j$ are maximal in the sense of inclusion, that is, if $Q_j\subsetneq Q'$ for a fixed $j$, then $\norm{f}_{\Phi,Q'}\leq \lambda$.

\medskip

Throughout this paper, we will denote $\Phi(t)=t^r(1+\log^+t)^{\delta}$, with $r\geq 1$ and $\delta\geq0$.

\begin{lema}\label{lema_propiedad_bk}
Given a number $a>1$, for $k\in\Z$ define $b_k=1/\Phi(a^{-k})$. Then,
\[a^r\leq \frac{b_{k+1}}{b_k}\leq \Phi(a),\]
for every $k$.
\end{lema}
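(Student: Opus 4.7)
The plan is to compute the ratio $b_{k+1}/b_k = \Phi(a^{-k})/\Phi(a^{-k-1})$ explicitly and handle the three regimes in which the $\log^+$ factor behaves differently. Writing
\[
\frac{b_{k+1}}{b_k}=\frac{a^{-kr}(1+\log^+ a^{-k})^{\delta}}{a^{-(k+1)r}(1+\log^+ a^{-k-1})^{\delta}}=a^r\left(\frac{1+\log^+ a^{-k}}{1+\log^+ a^{-k-1}}\right)^{\!\delta},
\]
one sees that the whole statement reduces to controlling the bracketed quotient between $1$ and $1+\log a$.

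First I would dispatch the easy cases. If $k\geq 0$, then $a^{-k-1}<a^{-k}\leq 1$, so both $\log^+$ terms vanish and the ratio equals exactly $a^r$; since $\delta\geq 0$ and $\log^+ a=\log a>0$, we have $a^r\leq a^r(1+\log a)^{\delta}=\Phi(a)$, giving both bounds. If $k=-1$, then $a^{-k-1}=1$ and $a^{-k}=a$, so $\Phi(a^{-k-1})=1$ and $b_0/b_{-1}=\Phi(a)$, which clearly satisfies both inequalities.

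The only genuine work is the case $k\leq -2$, where both $a^{-k}$ and $a^{-k-1}$ exceed $1$ and the $\log^+$'s can be dropped. Setting $s=-k\geq 2$ and $L=\log a>0$, the quotient in the bracket becomes
\[
\frac{1+sL}{1+(s-1)L}=1+\frac{L}{1+(s-1)L}.
\]
This is $\geq 1$, which immediately yields the lower bound $b_{k+1}/b_k\geq a^r$. For the upper bound I would check that this quantity is at most $1+L$; cross-multiplying, this is equivalent to
\[
(1+L)\bigl(1+(s-1)L\bigr)-(1+sL)=(s-1)L^2\geq 0,
\]
which holds since $s\geq 2$ and $L>0$. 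Raising to the $\delta$ and multiplying by $a^r$ gives $b_{k+1}/b_k\leq a^r(1+L)^{\delta}=\Phi(a)$, as required.

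The main (and essentially only) obstacle is bookkeeping around the non-smooth point $t=1$ of $\log^+$, which forces the separation $k\geq 0$, $k=-1$, $k\leq -2$; once that is done, both inequalities reduce to elementary algebra.
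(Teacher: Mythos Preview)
Your proof is correct and follows essentially the same approach as the paper: both compute $b_{k+1}/b_k=a^r\bigl(\tfrac{1+\log^+a^{-k}}{1+\log^+a^{-k-1}}\bigr)^{\delta}$ and split into the three cases $k\geq 0$, $k=-1$, $k\leq -2$ to bound the bracketed quotient between $1$ and $1+\log a$. The only cosmetic difference is that for $k\leq -2$ the paper bounds the quotient $1+\tfrac{\log a}{1+\log^+a^{-(k+1)}}$ above by $1+\log a$ directly (since the denominator is $\geq 1$), whereas you cross-multiply; the arguments are equivalent.
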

\begin{proof}
\begin{align*}
\frac{b_{k+1}}{b_k}&=\frac{a^{-rk}(1+\log^+a^{-k})^{\delta}}{a^{-r(k+1)}(1+\log^+a^{-(k+1)})^{\delta}}\\
&=a^r\left(\frac{1+\log^+a^{-k}}{1+\log^+a^{-(k+1)}}\right)^\delta=:a^r(\varphi_k(a))^{\delta}.
\end{align*}
Let us notice that, if $k=-1$, $\varphi_k(a)=(1+\log^+a)$. If $k\geq 0$, then $\varphi_k(a)=1$, and if $k<-1$ we have
\begin{align*}
1+\log^+(a^{-k})&=1+\log^+a^{-(k+1)}a\\
&=1+\log{a^{-(k+1)}}+\log a,
\end{align*}
thus $\varphi_k(a)=1+\frac{\log a}{1+\log^+ a^{-(k+1)}}$. So we can deduce that
\[1\leq \varphi_k(a)\leq 1+\log^+ a,\]
and by raising every member to the power $\delta$ and by multiplying by $a^r$ we are done.
\end{proof}

We shall devote the end of this section to prove some results concerning to the following set, which is essential in the proof of our main result.
For a fixed $a>1$ and for each
 $k\in\Z$, set
\[\Omega_k=\{x\in\R^n: M_{\mathcal{D}}v(x)>a^k\}\cap\{x\in\R^n: M_{\Phi,\mathcal{D}}g(x)>a^k\},\]
where $\Phi(t)=t^r(1+\log^+t)^{\delta}$, $v^r\in A_1$ and $g$ is a function that we define later in the corresponding proof. For each $k$,  $\Omega_k$ can be written as the disjoint union of dyadic maximal cubes $\{Q^k_j\}_j$ contained in a dyadic grid $\mathcal{D}$. Indeed, from Lemma~\ref{descomposicion_de_CZ_del_espacio} each set can be written in that way separately. Thus,
\begin{equation}\label{descomposicion_Omega_k}
\Omega_k=\left(\bigcup_\ell R_\ell^k\right)\cap\left(\bigcup_i S_i^k\right)=\bigcup_{\ell,i}R_\ell^k\cap S_i^k=\bigcup_j Q_j^k,
\end{equation}
where $Q_j^k=S_i^k$ if $S_i^k\subset R_\ell^k$ and $Q_j^k=R_\ell^k$ otherwise,  for each $(\ell,i)$ for which the intersection is nonempty. For these cubes we have that

\begin{equation}\label{propiedad_cubos_para_v_sin_gamma}
\frac{a^k}{[v]_{A_1}}\leq \frac{1}{[v]_{A_1}}\inf_{Q^k_j}M_\mathcal{D}v\leq \inf_{Q^k_j}v.
\end{equation}

\medskip

The next two lemmas deal with the dyadic maximal cubes $\{Q^k_j\}_j$ that decompose  $\Omega_k$.

\begin{lema}\label{lema_acotacion_promedios_vk}
Let $k\in\Z$, $v_k(x)=\min\{v^r(x),b_{k+1}\}$ with $v\in A_1$. If $Q^\ell_j$ is a cube as in \eqref{descomposicion_Omega_k} with $\ell\geq k$ then
\[\frac{b_k}{[v]_{A_1}^r}\leq \frac{1}{|Q^\ell_j|}\int_{Q^\ell_j}v_k(x)\,dx\leq b_{k+1},\]
where $b_k=1/\Phi(a^{-k})$.
\end{lema}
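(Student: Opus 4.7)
The upper bound $\frac{1}{|Q^\ell_j|}\int_{Q^\ell_j} v_k \leq b_{k+1}$ is immediate from $v_k=\min\{v^r,b_{k+1}\}\leq b_{k+1}$ pointwise, so the content of the lemma lies in the lower bound.

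My plan for the lower bound is to produce a pointwise estimate for $v_k$ on $Q^\ell_j$ that already matches the desired average bound, after which averaging is trivial. Since $Q^\ell_j$ is one of the cubes in the decomposition \eqref{descomposicion_Omega_k} at level $\ell$, the property \eqref{propiedad_cubos_para_v_sin_gamma} gives $\inf_{Q^\ell_j} v \geq a^\ell/[v]_{A_1}$, and using the hypothesis $\ell \geq k$ together with $a>1$ one obtains
\[
v(x)^r \geq \frac{a^{rk}}{[v]_{A_1}^r} \qquad \textrm{for a.e.\ } x \in Q^\ell_j.
\]
Because $s\mapsto \min\{s,b_{k+1}\}$ is non-decreasing, this implies $v_k(x)\geq \min\{a^{rk}/[v]_{A_1}^r,\, b_{k+1}\}$ pointwise on $Q^\ell_j$, and the same lower bound then holds for the mean of $v_k$ over $Q^\ell_j$.

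It then suffices to verify $\min\{a^{rk}/[v]_{A_1}^r,\,b_{k+1}\}\geq b_k/[v]_{A_1}^r$, which splits into two elementary checks. For the first entry, $a^{rk}/[v]_{A_1}^r \geq b_k/[v]_{A_1}^r$ reduces to $a^{rk}\geq b_k$, which is clear from $b_k=1/\Phi(a^{-k})=a^{rk}(1+\log^+ a^{-k})^{-\delta}\leq a^{rk}$. For the second, Lemma~\ref{lema_propiedad_bk} gives $b_{k+1}/b_k\geq a^r\geq 1$, and combined with $[v]_{A_1}\geq 1$ this yields $b_{k+1}\geq b_k\geq b_k/[v]_{A_1}^r$. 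No step presents a genuine obstacle; the only care required is to upgrade the $A_1$-type control on $v$ from an infimum to a pointwise statement before interacting with the truncation inside $v_k$, after which the problem collapses to the arithmetic of the sequence $\{b_k\}$ already recorded in Lemma~\ref{lema_propiedad_bk}.
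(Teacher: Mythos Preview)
Your proof is correct and follows essentially the same approach as the paper: both use the pointwise lower bound $v\geq a^k/[v]_{A_1}$ on $Q^\ell_j$ from \eqref{propiedad_cubos_para_v_sin_gamma}, the inequality $a^{rk}\geq b_k$, and Lemma~\ref{lema_propiedad_bk}. The only cosmetic difference is that the paper splits $Q^\ell_j$ into the sets $A=\{v_k=v^r\}$ and $B=\{v_k=b_{k+1}\}$ and estimates each piece of the integral separately, whereas you bypass this split by invoking the monotonicity of $s\mapsto\min\{s,b_{k+1}\}$ to obtain a uniform pointwise lower bound directly.
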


\begin{proof}
From the definition of $v_k$, we trivially have the second inequality. To see that the first one holds, let us consider the subsets of $Q^\ell_j$ given by $A=\{x\in Q^\ell_j: v_k(x)=v^r(x)\}$ and $B=Q^\ell_j\backslash A$.  Notice that
\begin{align*}
\frac{1}{|Q^\ell_j|}\int_{Q^\ell_j}v_k&=\frac{1}{|Q^\ell_j|}\left[\int_A v^r+\int_B b_{k+1}\right]\\
&\geq \frac{1}{|Q^\ell_j|}\left[(\inf_{A}v^r)|A|+b_{k+1}|B|\right]\\
&\geq \frac{1}{|Q^\ell_j|}\left[(\inf_{Q^\ell_j}v^r)|A|+b_k\frac{b_{k+1}}{b_k}|B|\right].
\end{align*}

From \eqref{propiedad_cubos_para_v_sin_gamma} and Lemma~\ref{lema_propiedad_bk} we have that
\begin{align*}
\frac{1}{|Q^\ell_j|}\int_{Q^\ell_j}v_k&\geq \frac{1}{|Q^\ell_j|}\left[\left(\frac{a^\ell}{[v]_{A_1}}\right)^r|A|+\frac{b_k}{[v]_{A_1}^r}a^r[v]_{A_1}^r|B|\right]\\
&\geq \frac{1}{|Q^\ell_j|}\left[\left(\frac{a^k}{[v]_{A_1}}\right)^r|A|+\frac{b_k}{[v]_{A_1}^r}a^r[v]_{A_1}^r|B|\right]\\
&\geq \frac{b_k}{[v]_{A_1}^r}\left[\frac{|A|+|B|}{|Q^\ell_j|}\right]=\frac{b_k}{[v]_{A_1}^r},
\end{align*}
where we have used that $\ell\geq k$ and $a^{rk}\geq b_k$, because of the definition of $\Phi$.
\end{proof}

%

Let us define $\Gamma=\{(k,j): |Q^k_j\cap\{x: v(x)\leq a^{k+1}\}|>0\}$. Thus, if $(k,j)\in\Gamma$ we obtain that

\begin{equation}\label{propiedad_cubos_para_v}
\frac{a^k}{[v]_{A_1}}\leq \frac{1}{[v]_{A_1}}\inf_{Q^k_j}M_\mathcal{D}v\leq \inf_{Q^k_j}v\leq \frac{1}{|Q^k_j|}\int_{Q^k_j}v\leq [v]_{A_1}\inf_{Q^k_j}v\leq[v]_{A_1}a^{k+1}.
\end{equation}

\begin{lema}\label{lema_estimacion_para_vt}
Let $Q_s^t$ be a cube such that $(t,s)\in\Gamma$, $v^r\in A_1$ and $E=Q_s^t\cap\{x: M_\mathcal{D}v(x)>a^k\}$, $k\in\Z$. Then, there exist positive constants $C>0$ and $\eta>1$ such that
\[v_t(E)\leq Cv_t(Q_s^t)a^{(t-k)r\eta}.\]
\end{lema}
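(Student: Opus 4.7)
The plan is to combine two ingredients: the $A_\infty$ distribution estimate of Lemma~\ref{estimacion_lema_M-W} (available since $v^r\in A_1\subseteq A_\infty$), together with the pointwise bound $v_t\leq b_{t+1}$ and the lower bound for the average of $v_t$ over $Q_s^t$ furnished by Lemma~\ref{lema_acotacion_promedios_vk}. The estimate is trivial for $k\leq t$ (then $a^{(t-k)r\eta}\geq 1$ and $v_t(E)\leq v_t(Q_s^t)$), so we may assume $k>t$.

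The first main step is to replace the level set of $M_\mathcal{D}v$ by a super-level set of $v^r$. Since $v^r\in A_1$ we have $M(v^r)\leq [v^r]_{A_1}v^r$ a.e., and because $r\geq 1$ Jensen's inequality gives $(M_\mathcal{D}v)^r\leq M_\mathcal{D}(v^r)\leq M(v^r)$. Hence $M_\mathcal{D}v\leq [v^r]_{A_1}^{1/r}v$ a.e., so
\[
E\subseteq Q_s^t\cap\{x:v^r(x)>a^{rk}/[v^r]_{A_1}\}
\]
up to a null set. Applying Lemma~\ref{estimacion_lema_M-W} to $w=v^r$ on $Q=Q_s^t$ with $\lambda=a^{rk}/[v^r]_{A_1}$ yields constants $C_0,\xi>0$ with
\[
|E|\leq C_0|Q_s^t|\left(\frac{[v^r]_{A_1}}{a^{rk}|Q_s^t|}\int_{Q_s^t}v^r\right)^{1+\xi}.
\]

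To estimate $\int_{Q_s^t}v^r$ I use that $(t,s)\in\Gamma$: by \eqref{propiedad_cubos_para_v}, $\inf_{Q_s^t}v\leq a^{t+1}$, whence $\inf_{Q_s^t}v^r\leq a^{r(t+1)}$, and the $A_1$ hypothesis on $v^r$ yields $|Q_s^t|^{-1}\int_{Q_s^t}v^r\leq [v^r]_{A_1}a^{r(t+1)}$. Substituting and absorbing the constants depending on $a$, $r$ and $[v^r]_{A_1}$ produces
\[
|E|\leq C|Q_s^t|a^{r(t-k)(1+\xi)}.
\]

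Finally, the pointwise bound $v_t\leq b_{t+1}$ gives $v_t(E)\leq b_{t+1}|E|$. Lemma~\ref{lema_acotacion_promedios_vk} applied with $\ell=k=t$ yields $v_t(Q_s^t)\geq b_t|Q_s^t|/[v]_{A_1}^r$, and Lemma~\ref{lema_propiedad_bk} gives $b_{t+1}\leq\Phi(a)b_t$; combining these, $b_{t+1}|Q_s^t|\leq\Phi(a)[v]_{A_1}^rv_t(Q_s^t)$, and therefore
\[
v_t(E)\leq Cv_t(Q_s^t)a^{r(t-k)(1+\xi)},
\]
which is the claim with $\eta=1+\xi>1$. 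The key technical point, and the only non-routine step, is the pointwise domination $M_\mathcal{D}v\leq[v^r]_{A_1}^{1/r}v$: it converts the level set of the dyadic maximal function into an ordinary super-level set of $v^r$ and so unlocks Lemma~\ref{estimacion_lema_M-W}. Everything else is careful bookkeeping with the constants $b_t$, the $\Gamma$-condition and the $A_1$ hypothesis.
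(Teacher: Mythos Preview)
Your proof is correct and follows essentially the same strategy as the paper: pass from the level set of $M_{\mathcal D}v$ to a super-level set of $v^r$ via the $A_1$ hypothesis, invoke Lemma~\ref{estimacion_lema_M-W}, control $\frac{1}{|Q_s^t|}\int_{Q_s^t}v^r$ through the $\Gamma$-condition, and convert $b_{t+1}|Q_s^t|$ into $v_t(Q_s^t)$ using Lemmas~\ref{lema_propiedad_bk} and~\ref{lema_acotacion_promedios_vk}. Your version is in fact slightly cleaner: you bound $v_t(E)\le b_{t+1}|E|$ directly and obtain $\eta=1+\xi$, whereas the paper interposes a H\"older step (choosing $p>1/\varepsilon$) that produces the smaller exponent $\eta=(1+\xi)/p'$; since both exceed $1$, the H\"older step is unnecessary for the stated lemma.
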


\begin{proof}
Notice that $E=Q_s^t\cap\{x: (M_\mathcal{D}v(x))^r>a^{kr}\}\subseteq Q_s^t\cap\{x: v^r(x)>a^{kr}/[v]_{A_1}^r\}=:F$. Using Lemma~\ref{estimacion_lema_M-W} for $v^r\in A_1\subseteq A_\infty$, there exist $C,\e$ such that

\begin{equation}\label{eq1_lema_estimacion_para_vt}
|E|\leq|F|=\left|Q_s^t\cap\left\{x: v^r(x)>\left(\frac{a^k}{[v]_{A_1}}\right)^r\right\}\right|\leq C|Q_s^t|\left[\frac{1}{a^{kr}|Q_s^t|}\int_{Q_s^t}v^r(x)\,dx\right]^{1/(1-\e)}.
\end{equation}

Thus, given $\e$, we choose $p>1/\e$ and apply H\"{o}lder's inequality with exponents $p$ and $p'$. From the definition of $v_t$ and \eqref{eq1_lema_estimacion_para_vt} we get
\begin{align*}
\int_E v_t\,dx&\leq \left(\int_E v_t^p\,dx\right)^{1/p}|E|^{1/p'}\\
&\leq C b_{t+1}|Q_s^t|^{1/p}|Q_s^t|^{1/p'}\left[\frac{1}{|Q_s^t|a^{kr}}\int_{Q_s^t}v^r\,dx\right]^{1/(p'(1-\e))}.
\end{align*}

By using \eqref{propiedad_cubos_para_v}, Lemmas~\ref{lema_propiedad_bk} and \ref{lema_acotacion_promedios_vk} and taking $\eta=1/{p'(1-\e)}$ we have that
\begin{align*}
\int_E v_t\,dx&\leq  C \frac{b_{t+1}}{b_t}b_t|Q_s^t|a^{(t-k)r\eta}\\
&\leq C\Phi(a)v_t(Q_s^t)a^{(t-k)r\eta}\\
&=Cv_t(Q_s^t)a^{(t-k)r\eta}.
\end{align*}
\end{proof}

\section{Proof of the main result}

We devote this section to the proof of Theorem~\ref{teo_main}. It follows similar lines as in \cite{Sawyer} but with substantial changes. Since it is quite long and have some technical calculations, for the sake of clearness we will write some claims that will be proved separately.

\begin{proof}[Proof of Theorem~\ref{teo_main}]
In order to prove inequality \eqref{eq_teo_main}, fix $t>0$, a dyadic grid $\mathcal{D}$ and let $g=fv/t$. Then, it will be enough to prove that
\[uw\left(\left\{x\in \R^n: M_{\Phi,\mathcal{D}}(g)(x)>v(x)\right\}\right)\leq C\int_{\R^n}\Phi(g)\,u\,dx.\]
We can assume, without loss of generality, that $g$ is a bounded function with compact support. Fix a number $a>\max\{2^n,L\}$, where $L$ is a quantity that will be chosen later. For every $k\in\Z$ consider the numbers $a^k$ and $b_k=1/\Phi(a^{-k})$. As we said before, the set
\[\Omega_k=\{x\in\R^n: M_{\mathcal{D}}v(x)>a^k\}\cap\{x\in\R^n: M_{\Phi,\mathcal{D}}g(x)>a^k\}\]
can be written as the disjoint union of dyadic maximal cubes $\{Q^k_j\}_j$, for each $k$ (see \eqref{descomposicion_Omega_k}). 

Let us consider the set $\Gamma=\{(k,j): |Q^k_j\cap\{x: v(x)\leq a^{k+1}\}|>0\}$. Thus, for $(k,j)\in\Gamma$ we have that
\eqref{propiedad_cubos_para_v} holds.

Notice also that if $A_k=\{x: a^k<v(x)\leq a^{k+1}\}$, then for each $k$ we have
\begin{align*}
A_k\cap\{x: M_{\Phi,\mathcal{D}}g(x)>v(x)\}&\subseteq\{x: M_\mathcal{D}v(x)>a^k\}\cap\{x: v(x)\leq a^{k+1}\}\cap\{x: M_{\Phi,\mathcal{D}}g(x)>a^k\}\\
&\subseteq\bigcup_{j: (k,j)\in \Gamma}Q^k_j,
\end{align*}
except for a set of measure zero. Also, if $x\in A_k$ then $b_k<w(x)\leq b_{k+1}$. Thus, we get
\begin{align*}
uw(\{x\in\R^n: M_{\Phi,\mathcal{D}}g>v\})&=\s{k\in\Z}{}uw(\{M_{\Phi,\mathcal{D}}g>v\}\cap A_k)\\
&\leq \s{k\in\Z}{}\frac{b_{k+1}}{b_k}b_ku(\{M_{\Phi,\mathcal{D}}g>v\}\cap A_k)\\
&\leq \Phi(a)\s{k\in\Z}{}\s{j:(k,j)\in\Gamma}{}b_ku(Q^k_j)\\
&\leq \Phi(a)[v]_{A_1}^r\s{k\in\Z}{}\s{j:(k,j)\in\Gamma}{}u(Q^k_j)\frac{v_k(Q^k_j)}{|Q^k_j|},
\end{align*}
where we have used Lemmas~\ref{lema_propiedad_bk} and~\ref{lema_acotacion_promedios_vk} and \eqref{propiedad_cubos_para_v}.

We  fix now a negative integer $N$ and define $\Gamma_N=\{(k,j)\in \Gamma: k\geq N\}$. The objective is to prove that there exists a positive constant $C$, independent of $N$, such that
\[\s{(k,j)\in\Gamma_N}{}u(Q^k_j)\frac{v_k(Q^k_j)}{|Q^k_j|}\leq C\int_{\R^n}\Phi(g)u\,dx.\]
If the estimate above can be achieved, then the result follows by letting $N\to-\infty$.
\medskip

Let $\Delta_N=\{Q^k_j: (k,j)\in \Gamma_N\}$. Given two cubes in $\Delta_N$ they are either disjoint or one is contained in the other. Also observe that if $k>t$,
$\Omega_k\subseteq \Omega_t$, so if there exist cubes $Q^k_j$, $Q_s^t$ for which $Q^k_j\cap Q_s^t\neq\emptyset$ necessarily we must have $Q^k_j\subseteq Q_s^t$.

If $\eta>1$ is the constant that appears in Lemma~\ref{lema_estimacion_para_vt}, we choose $1<\alpha<\eta$ and define a sequence of sets by induction in the following way:
\[G_0=\{(k,j)\in \Gamma_N: Q^k_j \textrm{ is maximal in }\Delta_N\},\]
and, in a colloquial way, a pair  $(k,j)$ in $\Gamma_N$ belongs to $G_{n+1}$ if the cube $Q^k_j$ has an ``ancestor''  $Q_s^t$, with $(t,s)\in G_n$, and
$Q^k_j$ is the ``first descendant'' in $\Gamma_N$ satisfying $\mu(Q_j^k)>\mu(Q_s^t)$, where  $\mu(Q_s^t)$ is the weighted average given by
\[\mu(Q_s^t):= \frac{b_t}{a^{\alpha r t}}\frac{1}{|Q_s^t|}\int_{Q_s^t}u(x)\,dx,\]
in the sense that $\mu(Q_i^\ell)\leq\mu(Q_s^t)$ for every $(\ell,i)\in\Gamma_N$ and $Q^k_j\subsetneq Q_i^\ell\subseteq Q_s^t$. That is, we define for $n\geq 0$, $G_{n+1}$ to be the set of pairs $(k,j)\in \Gamma_N$ such that there exists $(t,s)\in G_n$ with $Q_j^k\subsetneq Q_s^t$ and for which the inequalities

\begin{equation}\label{desigualdad1_conjuntoGn}
\frac{1}{|Q^k_j|}\int_{Q^k_j}u(x)\,dx>a^{(k-t)\alpha r} \frac{b_t}{b_k}\frac{1}{|Q_s^t|}\int_{Q_s^t}u(x)\,dx
\end{equation}
and
\begin{equation}\label{desigualdad2_conjuntoGn}
\frac{1}{|Q_i^\ell|}\int_{Q_i^\ell}u(x)\,dx\leq a^{(\ell-t)\alpha r} \frac{b_t}{b_\ell}\frac{1}{|Q_s^t|}\int_{Q_s^t}u(x)\,dx
\end{equation}
hold with $(\ell,i)\in\Gamma_N$ and $Q^k_j\subsetneq Q_i^\ell\subseteq Q_s^t$.


Notice that if $G_{n_0}=\emptyset$ for some $n_0$, then $G_{n}=\emptyset$ for every $n\geq n_0$.

Let $P=\bigcup_{n\geq 0}G_n$. If $(t,s)\in P$ we will say that the cube $Q_s^t$ is a \textit{principal cube}.

\begin{afirmacion}\label{afirmacion1}
There exists a positive constant $C$ such that
\[\s{(k,j)\in\Gamma_N}{}\frac{1}{|Q^k_j|}v_k(Q^k_j)u(Q^k_j)\leq C \s{(k,j)\in P}{}\frac{1}{|Q^k_j|}v_k(Q^k_j)u(Q^k_j).\]
\end{afirmacion}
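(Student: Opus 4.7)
The plan is a principal-cubes grouping argument. For each $(k,j)\in\Gamma_N$ let $\pi(k,j)=(t,s)$ denote its smallest principal ancestor, with the convention $\pi(k,j)=(k,j)$ when $(k,j)\in P$. Then I rewrite
\[\sum_{(k,j)\in\Gamma_N}\frac{v_k(Q^k_j)\,u(Q^k_j)}{|Q^k_j|}=\sum_{(t,s)\in P}\sum_{(k,j):\,\pi(k,j)=(t,s)}\frac{v_k(Q^k_j)\,u(Q^k_j)}{|Q^k_j|},\]
and aim to bound each inner sum by $C\,v_t(Q^t_s)\,u(Q^t_s)/|Q^t_s|$.

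The first step is the pointwise control
\[\frac{u(Q^k_j)}{|Q^k_j|}\leq a^{(k-t)\alpha r}\,\frac{b_t}{b_k}\,\frac{u(Q^t_s)}{|Q^t_s|}\qquad (*)\]
whenever $\pi(k,j)=(t,s)$. It is trivial if $(k,j)=(t,s)$. Otherwise, if $(*)$ failed then $(k,j)$ itself would satisfy \eqref{desigualdad1_conjuntoGn} relative to $(t,s)\in G_n$, and walking down the dyadic chain from $Q^t_s$ to $Q^k_j$ one would find a first cube $Q^{\ell_0}_{i_0}$ along that chain with $\mu(Q^{\ell_0}_{i_0})>\mu(Q^t_s)$; by construction this $(\ell_0,i_0)$ would satisfy \eqref{desigualdad1_conjuntoGn} and \eqref{desigualdad2_conjuntoGn}, hence $(\ell_0,i_0)\in G_{n+1}\subseteq P$. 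This contradicts either $(k,j)\notin P$ (if $(\ell_0,i_0)=(k,j)$) or the minimality of $\pi(k,j)=(t,s)$ (if $(\ell_0,i_0)$ is strictly between). Thus $(*)$ holds and the factor $u(Q^t_s)/|Q^t_s|$ can be extracted from the inner sum.

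It remains to prove
\[\sum_{(k,j):\,\pi(k,j)=(t,s)}a^{(k-t)\alpha r}\,\frac{b_t}{b_k}\,v_k(Q^k_j)\leq C\,v_t(Q^t_s).\]
For fixed $k\geq t$ the cubes $\{Q^k_j:\pi(k,j)=(t,s)\}$ are disjoint and contained in $Q^t_s\cap\Omega_k$, so their $v_k$-masses sum to at most $v_k\bigl(Q^t_s\cap\{M_\mathcal{D}v>a^k\}\bigr)$. Reproducing the proof of Lemma~\ref{lema_estimacion_para_vt} verbatim with $v_k$ in place of $v_t$ (using $v_k\leq b_{k+1}$, Hölder's inequality with $p>1/\e$, Lemma~\ref{estimacion_lema_M-W} applied to $v^r\in A_\infty$, and the bound $|Q^t_s|^{-1}\int_{Q^t_s}v^r\leq[v^r]_{A_1}a^{(t+1)r}$ available because $(t,s)\in\Gamma$) yields
\[v_k\bigl(Q^t_s\cap\{M_\mathcal{D}v>a^k\}\bigr)\leq C\,b_{k+1}\,|Q^t_s|\,a^{(t-k)r\eta}\]
with the same $\eta=1/(p'(1-\e))>1$ as in that lemma. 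Plugging this in and using Lemma~\ref{lema_propiedad_bk} (so $b_{k+1}/b_k\leq\Phi(a)$), the inner sum is dominated by
\[C\,\Phi(a)\,b_t\,|Q^t_s|\sum_{k\geq t}a^{(k-t)r(\alpha-\eta)},\]
a geometric series that converges because the construction was fixed with $1<\alpha<\eta$. A final appeal to Lemma~\ref{lema_acotacion_promedios_vk} gives $b_t|Q^t_s|\leq[v]_{A_1}^r\,v_t(Q^t_s)$, closing the estimate.

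The main obstacle I expect is the bookkeeping around $(*)$: one has to justify carefully that the minimality of $\pi(k,j)=(t,s)$ forces \eqref{desigualdad1_conjuntoGn} to fail at $(k,j)$, using that no principal cube can sit strictly between $Q^k_j$ and $Q^t_s$. The remaining technical work---adapting the Hölder argument of Lemma~\ref{lema_estimacion_para_vt} so that the same exponent $\eta$ serves for the $v_k$-version, and summing the resulting geometric series---is essentially routine once the gap $\alpha<\eta$ is in place.
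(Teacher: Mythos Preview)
Your proof is correct and follows essentially the same principal-cubes grouping argument as the paper: the same partition by smallest principal ancestor, the same bound $(*)$ (which the paper obtains by citing \eqref{desigualdad2_conjuntoGn} directly, relying implicitly on the very contradiction argument you spell out), and the same geometric series with exponent $r(\alpha-\eta)$. The only cosmetic difference is that the paper first converts $v_k(Q^k_j)$ to $v_t(Q^k_j)$ via the ratio bound from Lemma~\ref{lema_acotacion_promedios_vk} and then applies Lemma~\ref{lema_estimacion_para_vt} as stated, whereas you re-run that lemma's H\"older argument with $v_k$ in place of $v_t$ and invoke Lemma~\ref{lema_acotacion_promedios_vk} only at the end to pass from $b_t|Q^t_s|$ to $v_t(Q^t_s)$; the two routes are equivalent.
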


For each fixed $k\in\Z$, let us consider the disjoint collection $\{\tilde Q_i^k\}_i$ of maximal dyadic cubes given by Lemma~\ref{descomposicion_de_CZ_del_espacio}, whose union is the set $\{x\in \R^n: M_{\Phi,\mathcal{D}}g(x)>a^k\}$.  Thus, for every $i$ it follows that
\begin{equation}
a^k<\norm{g}_{\Phi,\tilde Q_i^k},
\end{equation}
which is equivalent to
\begin{equation}\label{propiedad_promedios_g}
1<\frac{1}{|\tilde Q_i^k|}\int_{\tilde Q_i^k}\Phi\left(\frac{g(y)}{a^k}\right)\,dy.
\end{equation}
Since $Q^k_j\subseteq\{x: M_{\Phi,\mathcal{D}}g(x)>a^k\}$, for each $j$ there is a unique $i=i(j,k)$ such that $Q^k_j\subseteq \tilde Q_i^k$. By Claim~\ref{afirmacion1}, Lemma~\ref{lema_acotacion_promedios_vk} and \eqref{propiedad_promedios_g} we have that
\begin{align*}
\s{(k,j)\in \Gamma_N}{}\frac{1}{|Q^k_j|}v_k(Q^k_j)u(Q^k_j)&\leq C\s{(k,j)\in P}{}\frac{1}{|Q^k_j|}v_k(Q^k_j)u(Q^k_j)\\
&\leq C \s{(k,j)\in P}{}\frac{b_{k+1}}{b_k}b_k\frac{u(Q^k_j)}{|\tilde Q_i^k|}\int_{\tilde Q_i^k}\Phi\left(\frac{g}{a^k}\right)\,dx.
\end{align*}
Since $\Phi$ is submultiplicative and from the definition of $b_k$ we obtain
\begin{align*}
\s{(k,j)\in \Gamma_N}{}\frac{1}{|Q^k_j|}v_k(Q^k_j)u(Q^k_j)&\leq C\s{(k,j)\in P}{}\Phi(a)\frac{1}{\Phi(a^{-k})}\Phi(a^{-k})\frac{u(Q^k_j)}{|\tilde Q_i^k|}\int_{\tilde Q_i^k}\Phi(g)\,dx\\
&=C\int_{\R^n}\left[\s{(k,j)\in P}{}\frac{1}{|\tilde Q_i^k|}u(Q^k_j)\mathcal{X}_{\tilde Q_i^k}(x)\right]\Phi(g(x))\,dx\\
&=C\int_{\R^n}h(x)\Phi(g(x))\,dx,
\end{align*}
where $h(x)=\s{(k,j)\in P}{}|\tilde Q_i^k|^{-1}u(Q^k_j)\mathcal{X}_{\tilde Q_i^k}(x)$.

In order to finish, it only remains to show that there exists $C>0$ such that $h(x)\leq Cu(x)$. For a given $x\in \R^n$, we can assume that $u(x)<\infty$. For every fixed $k$ there exists at most one $\tilde Q_i^k$ that verifies $x\in \tilde Q_i^k$. If so, we denote it $\tilde Q^k$ and for every $k$ we define $P_k=\{(k,j)\in P: Q^k_j\subseteq \tilde Q^k\}$ and $G=\{k: P_k\neq\emptyset\}$. Recall that $k\geq N$, and therefore $G$ is bounded from below. Let $k_0$ be the smallest element in $G$. We will build a sequence in $G$ in the following way: chosen $k_m$, for $m\geq 0$ we select $k_{m+1}$ the smallest integer in $G$ greater than $k_m$ satisfying
\begin{equation}\label{desigualdad1_sucesionkm}
\frac{1}{|\tilde Q^{k_{m+1}}|}\int_{\tilde Q^{k_{m+1}}}u(y)\,dy>\frac{2}{|\tilde Q^{k_m}|}\int_{\tilde Q^{k_m}}u(y)\,dy.
\end{equation}
It is clear that, if $k_m\leq \ell< k_{m+1}$, then
\begin{equation}\label{desigualdad2_sucesionkm}
\frac{1}{|\tilde Q^{\ell}|}\int_{\tilde Q^{\ell}}u(y)\,dy\leq\frac{2}{|\tilde Q^{k_m}|}\int_{\tilde Q^{k_m}}u(y)\,dy.
\end{equation}

The sequence $\{k_m\}_{m\geq0}$ defined above has only a finite number of terms. Indeed, if it was not the case, by applying condition \eqref{desigualdad1_sucesionkm} repeatedly,  we would have
\[[u]_{A_1}u(x)\geq \frac{1}{|\tilde Q^{k_m}|}\int_{\tilde Q^{k_m}}u(y)\,dy>2^m\frac{1}{|\tilde Q^{k_0}|}\int_{\tilde Q^{k_0}}u(y)\,dy\]
for every $m>0$, and taking limit when $m\to\infty$ we would get a contradiction. Therefore $\{k_m\}=\{k_m\}_{m=0}^{m_0}$.

With this fact in mind and denoting $F_m=\{\ell\in G: k_m\leq \ell<k_{m+1}\}$ we can write

\begin{align*}
h(x)&=\s{(k,j)\in P}{}\frac{1}{|\tilde Q_i^k|}u(Q^k_j)\mathcal{X}_{\tilde Q_i^k}(x)\\
&=\s{(k,j)\in P}{}\frac{u(Q^k_j)}{u(\tilde Q^k)}\left(\frac{1}{|\tilde Q^k|}\int_{\tilde Q^k}u(y)\,dy\right)\\
&=\s{m=0}{m_0}\s{\ell\in F_m}{}\left(\frac{1}{|\tilde Q^\ell|}\int_{\tilde Q^\ell}u(y)\,dy\right)\s{j:(\ell,j)\in P_\ell}{}\frac{u(Q_j^\ell)}{u(\tilde Q^\ell)}\\
&\leq 2\s{m=0}{m_0}\left(\frac{1}{|\tilde Q^{k_m}|}\int_{\tilde Q^{k_m}}u(y)\,dy\right)\s{\ell\in F_m}{}\s{j: (\ell,j)\in P_\ell}{}\frac{u(Q_j^\ell)}{u(\tilde Q^\ell)},
\end{align*}
where in the last inequality we have used condition \eqref{desigualdad2_sucesionkm}.

\begin{afirmacion}\label{afirmacion2}
There exists a positive constant $C$ such that
\[\s{\ell\in F_m}{}\s{j:(\ell,j)\in P_\ell}{}\frac{u(Q_j^\ell)}{u(\tilde Q^\ell)}\leq C.\]
\end{afirmacion}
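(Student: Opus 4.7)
The plan is to prove Claim~\ref{afirmacion2} by first establishing a Carleson-type packing for the principal cubes with respect to $u$, and then aggregating the contributions over $\ell\in F_m$ by using the density comparability of $\{\tilde Q^\ell\}_{\ell\in F_m}$ guaranteed by \eqref{desigualdad2_sucesionkm}.

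\emph{Step 1 (Carleson packing).} Fix a principal cube $R=Q_s^t$ and let $\{Q_{j_i}^{k_i}\}_i$ be its immediate principal descendants, i.e., the elements of $G_{n+1}$ whose principal parent is $(t,s)\in G_n$. These are pairwise disjoint subsets of $R$. From \eqref{desigualdad1_conjuntoGn}, fixing $k>t$ and summing over $k_i=k$ (using $\sum_{k_i=k}u(Q_{j_i}^{k_i})\leq u(R)$) yields
\[\sum_{k_i=k}|Q_{j_i}^{k_i}|\leq \frac{b_k}{b_t\,a^{(k-t)\alpha r}}|R|.\]
Summing over $k>t$ and invoking Lemma~\ref{lema_propiedad_bk} to bound $b_k/b_t\leq \Phi(a)^{k-t}$ gives
\[\sum_i|Q_{j_i}^{k_i}|\leq |R|\sum_{n\geq 1}\left(\frac{\Phi(a)}{a^{\alpha r}}\right)^n.\]
Since $\alpha>1$, $r\geq 1$, and $\Phi(a)/a^{\alpha r}=(1+\log^+ a)^\delta\,a^{-(\alpha-1)r}\to 0$ as $a\to\infty$, we may fix $L$ in $a>\max\{2^n,L\}$ large enough to make this geometric sum any prescribed $\eta\in(0,1)$. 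Since $u\in A_1\subseteq A_\infty$, the characterization $u(E)/u(R)\leq C(|E|/|R|)^\e$ gives $\sum_i u(Q_{j_i}^{k_i})\leq \theta u(R)$ with $\theta=C\eta^\e<1$. Iterating down the principal tree then yields the packing
\[\sum_{Q\in P,\,Q\subseteq R}u(Q)\leq \frac{u(R)}{1-\theta}\]
for any principal $R$, and, by decomposing into maximal-in-$R$ principal sub-cubes, for any cube $R$.

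\emph{Step 2 (Aggregation over $F_m$).} Every principal cube $Q_j^\ell$ with $(\ell,j)\in P_\ell$ and $\ell\in F_m$ lies in $\tilde Q^\ell\subseteq \tilde Q^{k_m}$. Combining \eqref{desigualdad2_sucesionkm} with the $A_1$ condition on $u$,
\[\frac{u(\tilde Q^{k_m})}{|\tilde Q^{k_m}|}\leq [u]_{A_1}\frac{u(\tilde Q^\ell)}{|\tilde Q^\ell|}\leq 2[u]_{A_1}\frac{u(\tilde Q^{k_m})}{|\tilde Q^{k_m}|},\qquad \ell\in F_m,\]
so the densities along the chain are all comparable to that of $\tilde Q^{k_m}$. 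Applying Step~1 to $R=\tilde Q^{k_m}$, using this comparability to replace $u(\tilde Q^\ell)$ in the denominators (modulo volume ratios), and tracking how each $Q_j^\ell$ sits in the nested chain $\{\tilde Q^\ell\}_{\ell\in F_m}$ produces the claimed uniform bound.

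The main difficulty I anticipate is in Step~2: a naive level-by-level bound using only $\sum_j u(Q_j^\ell)\leq u(\tilde Q^\ell)$ yields $C|F_m|$, which need not be uniformly bounded. To beat this, one must exploit the growth of $u$-densities along the principal tree together with the nesting of $\{\tilde Q^\ell\}_{\ell\in F_m}$, so that contributions from principal cubes sitting in the progressively smaller $\tilde Q^\ell$'s telescope to a universal constant.
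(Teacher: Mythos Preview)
Your Step~1 is correct: the inequality \eqref{desigualdad1_conjuntoGn} does give $\sum_i|Q_{j_i}^{k_i}|\leq \eta|R|$ with $\eta<1$ for $a$ large, and the $A_\infty$ condition then yields the Carleson packing $\sum_{Q\in P,\,Q\subseteq R}u(Q)\leq Cu(R)$.

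The gap is in Step~2, and you essentially admit it yourself. The packing inequality, applied to $R=\tilde Q^{k_m}$, controls $\sum_{\ell\in F_m}\sum_j u(Q_j^\ell)$, but the quantity you need has $u(\tilde Q^\ell)$, not $u(\tilde Q^{k_m})$, in the denominator. Your density comparability only says $u(\tilde Q^\ell)\approx \dfrac{|\tilde Q^\ell|}{|\tilde Q^{k_m}|}\,u(\tilde Q^{k_m})$, so after substituting you pick up an uncontrolled factor $|\tilde Q^{k_m}|/|\tilde Q^\ell|$. Applying the packing instead to each $\tilde Q^\ell$ returns you to the $C|F_m|$ bound. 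Nothing in your argument produces a decay in $\ell-k_m$ that would make the sum over $\ell$ converge, and the vague phrase ``tracking how each $Q_j^\ell$ sits in the nested chain'' does not supply the missing mechanism.

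The paper's proof is built around exactly this missing piece. It shows directly that for every $(\ell,j)\in P_\ell$ with $k_m\le \ell<k_{m+1}$ one has
\[
\frac{1}{|Q_j^\ell|}\int_{Q_j^\ell}u\;>\;\frac{a^{(\ell-k_m)\gamma}}{2[u]_{A_1}}\,\frac{1}{|\tilde Q^\ell|}\int_{\tilde Q^\ell}u
\]
for some $\gamma>0$. This is obtained by locating a cube $Q_s^{k_m}$ with $Q_j^\ell\subseteq Q_s^{k_m}\subsetneq\tilde Q^{k_m}$, proving $(k_m,s)\in\Gamma_N$, and then chaining \eqref{desigualdad1_conjuntoGn}--\eqref{desigualdad2_conjuntoGn} through the smallest principal ancestor of $Q_s^{k_m}$ together with Lemmas~\ref{relacion_phi_con_potencia} and~\ref{lema_propiedad_bk}. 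From this density inequality, $A_1$ and $A_\infty$ give $\sum_j u(Q_j^\ell)/u(\tilde Q^\ell)\leq C\,a^{-(\ell-k_m)\gamma\nu}$, and the sum over $\ell\in F_m$ is then a convergent geometric series. Your Carleson packing is a weaker, aggregated consequence of this pointwise density growth and cannot by itself recover the level-by-level decay.
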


If this claim holds, we are done. Indeed, denoting $C_m=|\tilde Q^{k_m}|^{-1}\int_{\tilde Q^{k_m}}u$  and using the estimate above we have that
\begin{align*}
h(x)&\leq C\s{m=0}{m_0}C_m\leq C\s{m=0}{m_0} C_{m_0}2^{m-m_0}\\
&= CC_{m_0}2^{-m_0}\s{m=0}{m_0}2^m=CC_{m_0}2^{-m_0}(2^{m_0+1}-1)\\
&\leq CC_{m_0}\leq C[u]_{A_1}u(x).
\end{align*}
\end{proof}

In order to conclude, we prove Claim~\ref{afirmacion1} and \ref{afirmacion2}.

\bigskip

\begin{proof}[Proof of Claim~\ref{afirmacion1}]
Fix $(t,s)\in P$ and define
 \[
I(t,s)=\{(k,j)\in \Gamma_N: Q^k_j\subseteq Q_s^t \textrm{ and $Q_s^t$ is the smallest principal cube that contains } Q^k_j\}.\]
 Particularly, every $Q^k_j$ with $(k,j)\in I(t,s)$ is not principal, unless $(k,j)=(t,s)$. From condition \eqref{desigualdad2_conjuntoGn} we can write
 \[\s{(k,j)\in I(t,s)}{}\frac{v_k(Q^k_j)}{|Q^k_j|}u(Q^k_j)\leq \s{(k,j)\in I(t,s)}{}a^{(k-t)\alpha r}\frac{b_t}{b_k}\frac{u(Q_s^t)}{|Q_s^t|}\frac{v_k(Q^k_j)}{v_t(Q^k_j)}v_t(Q^k_j).\]
From Lemma~\ref{lema_acotacion_promedios_vk} with $k>t$ we have that
\[\frac{v_k(Q^k_j)}{v_t(Q^k_j)}\leq [v]_{A_1}^r\frac{b_{k+1}}{b_t}\leq [v]_{A_1}^r\Phi(a)\frac{b_k}{b_t},\]
so we can write
\begin{align*}
\s{(k,j)\in I(t,s)}{}\frac{v_k(Q^k_j)}{|Q^k_j|}u(Q^k_j)& \leq\Phi(a)[v]_{A_1}^r\frac{u(Q_s^t)}{|Q_s^t|}\s{(k,j)\in I(t,s)}{}a^{(k-t)\alpha r}\frac{b_t}{b_k}\frac{b_k}{b_t}v_t(Q^k_j)\\
&\leq \Phi(a)[v]_{A_1}^r \frac{u(Q_s^t)}{|Q_s^t|}\s{(k,j)\in I(t,s)}{}a^{(k-t)\alpha r}v_t(Q_s^t\cap\{x: M_{\mathcal{D}}v(x)>a^k\}).
\end{align*}
From Lemma~\ref{lema_estimacion_para_vt},
\begin{align*}
\s{(k,j)\in I(t,s)}{}\frac{v_k(Q^k_j)}{|Q^k_j|}u(Q^k_j)&\leq C\Phi(a)[v]_{A_1}^r\frac{u(Q_s^t)}{|Q_s^t|}\s{(k,j)\in I(t,s)}{}a^{(k-t)\alpha r}v_t(Q_s^t)a^{(t-k)r\eta}\\
&\leq C\Phi(a)[v]_{A_1}^r\frac{u(Q_s^t)}{|Q_s^t|}v_t(Q_s^t)\s{k\geq t}{}a^{(t-k)r(\eta-\alpha)}\\
&\leq C\Phi(a)[v]_{A_1}^r\frac{u(Q_s^t)}{|Q_s^t|}v_t(Q_s^t),
\end{align*}
since $\eta-\alpha>0$ and $a>2^n>1$. Thus, we have obtained that
\[\s{(k,j)\in I(t,s)}{}\frac{v_k(Q^k_j)}{|Q^k_j|}u(Q^k_j)\leq C\Phi(a)[v]_{A_1}^r\frac{u(Q_s^t)}{|Q_s^t|}v_t(Q_s^t),\]
and if we sum over all $(t,s)\in P$ it follows that
\[\s{(k,j)\in \Gamma_N}{}\frac{v_k(Q^k_j)}{|Q^k_j|}u(Q^k_j)\leq\s{(t,s)\in P}{}\s{(k,j)\in I(t,s)}{}\frac{v_k(Q^k_j)}{|Q^k_j|}u(Q^k_j)\leq C\s{(t,s)\in P}{}\frac{v_k(Q_s^t)}{|Q_s^t|}u(Q_s^t).\]
\end{proof}

\bigskip

\begin{proof}[Proof of Claim~\ref{afirmacion2}]
Assume for the moment that there exists a positive number $\gamma$ such that if $(\ell,j)\in P_\ell$ and $k_m\leq \ell<k_{m+1}$ then
\begin{equation}\label{ec1_demo_afirmacion2}
\frac{1}{|Q_j^\ell|}\int_{Q_j^\ell}u(y)\,dy>\frac{a^{(\ell-k_m)\gamma}}{2[u]_{A_1}}\frac{1}{|\tilde Q^\ell|}\int_{\tilde Q^\ell}u(y)\,dy.
\end{equation}
Then, if $y\in Q_j^\ell$,
\begin{align*}
u(y)[u]_{A_1}&\geq\frac{1}{|Q_j^\ell|}\int_{Q_j^\ell}u(z)\,dz\\
&>\frac{a^{(\ell-k_m)\gamma}}{2[u]_{A_1}}\frac{1}{|\tilde Q^\ell|}\int_{\tilde Q^\ell}u(y)\,dy
\end{align*}
and thus,
\[u(y)>\frac{a^{(\ell-k_m)\gamma}}{2[u]_{A_1}^2}\frac{u(\tilde Q^\ell)}{|\tilde Q^\ell|}=:\lambda,\]
which implies that
\[\bigcup_{j:(\ell,j)\in P_\ell}Q_j^\ell\subseteq\{x\in \tilde Q^\ell: u(x)>\lambda\}.\]
Since $u\in A_1\subseteq A_{\infty}$, there exist positive constants $C$ and $\nu$ for which $\frac{u(E)}{u(Q)}\leq C\left(\frac{|E|}{|Q|}\right)^{\nu}$ holds, for every measurable $E\subseteq Q$. So, by Chebyshev's inequality and the definition of $\lambda$ we have
\begin{align*}
\s{j:(\ell,j)\in P_\ell}{}u(Q_j^\ell)&=u\left(\bigcup_{j:(\ell,j)\in P_\ell} Q_j^\ell\right)\\
&\leq u(\{x\in \tilde Q^\ell: u(x)>\lambda\})\\
&\leq Cu(\tilde Q^\ell)\left(\frac{|\{x\in \tilde Q^\ell: u(x)>\lambda\}|}{|\tilde Q^\ell|}\right)^{\nu}\\
&\leq Cu(\tilde Q^\ell)\left(\frac{1}{\lambda|\tilde Q^\ell|}\int_{\tilde Q^\ell}u(y)\,dy\right)^{\nu}\\
&=Cu(\tilde Q^\ell)a^{(k_m-\ell)\gamma\nu},
\end{align*}
and finally
\begin{align*}
\s{\ell\in F_m}{}\s{j:(\ell,j)\in P_\ell}{}\frac{u(Q_j^\ell)}{u(\tilde Q^\ell)}&\leq C\s{\ell\in F_m}{}a^{(k_m-\ell)\gamma\nu}\\
&\leq C\s{\ell\geq k_m}{}a^{(k_m-\ell)\gamma\nu}=C,
\end{align*}
since $a>1$. This gives us the so desired proof.

\bigskip

We now prove that~\eqref{ec1_demo_afirmacion2} actually holds. Pick $(\ell,j)\in P_\ell$ with $k_m\leq \ell<k_{m+1}$. Since $\Omega_\ell\subseteq \Omega_{k_m}$, by maximality there exists a unique $s$ such that $Q_j^\ell\subseteq Q_s^{k_m}$. We want to see that $(k_m,s)\in \Gamma_N$. If $(k_m,s)\in P$ we are done since $P\subseteq \Gamma_N$. Then, let us assume that $(k_m,s)\not\in P$. From the definition of $G$ and $P_{k_m}$, $\tilde Q^{k_m}$ contains a cube $Q_p^{k_m}$ with $(k_m,p)\in P$. We shall see, as a first step, that $Q_s^{k_m}\subsetneq \tilde Q^{k_m}$. Indeed, there exists a unique $i(s)$ such that $Q_j^\ell\subseteq Q_s^{k_m}\subseteq \tilde Q_{i(s)}^{k_m}$. Also,
\[\left\{x: M_{\Phi,\mathcal{D}}g(x)>a^\ell\right\}\subseteq \left\{x: M_{\Phi,\mathcal{D}}g(x)>a^{k_m}\right\}=\bigcup_i \tilde Q_i^{k_m},\]
so there exists a unique $i_0$ such that $Q_j^\ell \subseteq \tilde Q^\ell\subseteq \tilde Q_{i_0}^{k_m}$. Besides, from the definition of $\tilde Q^k$, $x\in \tilde Q^{k_m}$ and $x\in \tilde Q_{i_0}^{k_m}$ so we must have
\[\tilde Q_{i(s)}^{k_m}=\tilde Q_{i_0}^{k_m}=\tilde Q^{k_m},\]
which directly implies $Q_s^{k_m}\subseteq \tilde Q^{k_m}$. In fact, this inclusion is proper since both $Q_s^{k_m}$ and $Q_p^{k_m}$ are contained in $\tilde Q^{k_m}$ and $s\neq p$.

Now observe that $\tilde Q^{k_m}$ is a maximal cube of the set $\{x: M_{\Phi,\mathcal{D}}g(x)>a^{k_m}\}$ and $Q_s^{k_m}$ is a maximal cube of
\[\Omega_{k_m}=\{x\in\R^n: M_{\mathcal{D}}v(x)>a^{k_m}\}\cap\{x\in\R^n: M_{\Phi,\mathcal{D}}g(x)>a^{k_m}\},\]
 and since $Q_s^{k_m}\subsetneq \tilde Q^{k_m}$ it follows that $Q_s^{k_m}$ is a maximal dyadic cube of $\{x: M_{\mathcal{D}}v(x)>a^{k_m}\}$. Thus
\begin{equation}\label{ec2_demo_afirmacion2}
\frac{1}{|Q_s^{k_m}|}\int_{Q_s^{k_m}}v(y)\,dy\leq 2^na^{k_m}\leq a^{k_m+1},
\end{equation}
so that $|Q_s^{k_m}\cap\{x:v(x)\leq a^{k_m+1}\}|>0$. Indeed, if not, denoting $E=Q_s^{k_m}\cap\{x: v(x)>a^{k_m+1}\}$ we would have
\[\frac{1}{|Q_s^{k_m}|}\int_{Q_s^{k_m}}v(y)\,dy=\frac{1}{|E|}\int_{Q_s^{k_m}}v(y)\,dy>\frac{1}{|E|}\int_E v(y)\,dy>a^{k_m+1},\]
which contradicts~\eqref{ec2_demo_afirmacion2}. Therefore, $(k_m,s)\in \Gamma_N$ and $Q_s^{k_m}$ is contained in, at least, one principal cube. Let $Q_{\sigma}^k$ the smallest principal cube that contains
$Q_s^{k_m}$. By using conditions~\eqref{desigualdad1_conjuntoGn} and~\eqref{desigualdad2_conjuntoGn} we can write
\[\frac{1}{|Q_j^\ell|}\int_{Q_j^\ell}u(y)\,dy>a^{(\ell-k)\alpha r}\frac{b_k}{b_\ell}\frac{1}{|Q_\sigma^k|}\int_{Q_\sigma^k}u(y)\,dy\geq a^{(\ell-k_m)\alpha r}\frac{b_{k_m}}{b_\ell}\frac{1}{|Q_s^{k_m}|}\int_{Q_s^{k_m}}u(y)\,dy\]

Also, from \eqref{desigualdad2_sucesionkm}
\[\frac{1}{|\tilde Q^\ell|}\int_{\tilde Q^\ell}u(y)\,dy\leq \frac{2}{|\tilde Q^{k_m}|}\int_{\tilde Q^{k_m}}u(y)\,dy\leq 2[u]_{A_1}\inf_{\tilde Q^{k_m}}u\leq2[u]_{A_1}\frac{1}{|Q_s^{k_m}|}\int_{Q_s^{k_m}}u(y)\,dy.\]
With these two inequalities we obtain the following estimate
\begin{align*}
\frac{1}{|Q_j^\ell|}\int_{Q_j^\ell}u(y)\,dy&>a^{(\ell-k)\alpha r}\frac{b_k}{b_\ell}\frac{b_{k_m}}{b_k}a^{(k-k_m)\alpha r}\frac{1}{|Q_s^{k_m}|}\int_{Q_s^{k_m}}u(y)\,dy\\
&=a^{(\ell-k_m)\alpha r}\frac{b_{k_m}}{b_\ell}\frac{1}{|Q_s^{k_m}|}\int_{Q_s^{k_m}}u(y)\,dy\\
&>\frac{1}{2[u]_{A_1}}\frac{a^{(\ell-k_m)\alpha r}}{(\Phi(a))^{\ell-k_m}}\frac{1}{|\tilde Q^\ell|}\int_{\tilde Q^\ell}u(y)\,dy,
\end{align*}
by Lemma~\ref{lema_propiedad_bk}.
We now apply Lemma~\ref{relacion_phi_con_potencia} with $0<\beta<r(\alpha-1)$. So, for $t\geq 1$ we have that $\Phi(t)\leq C_0t^{r+\beta}$. Thus,

\[\frac{1}{|Q_j^\ell|}\int_{Q_j^\ell}u(y)\,dy>\frac{1}{2[u]_{A_1}}\frac{a^{(\ell-k_m)(\alpha r-r-\beta)}}{C_0^{\ell-km}}\frac{1}{|\tilde Q^\ell|}\int_{\tilde Q^\ell}u(y)\,dy.\]
Recalling that $a>\max\{2^n,L\}$, if $\delta>0$ then $a^{\alpha r-r-\beta}/C_0>1$ by choosing $L=(\delta/\beta)^{\delta/(r(\alpha-1)-\beta)}$. Indeed, if $C_0=1$ the previous inequality trivially holds. If  not, $C_0=(\delta/\beta)^{\delta}$ and the inequality holds if and only if $a>(\delta/\beta)^{\delta/(r(\alpha-1)-\beta)}$. So, if we denote $\theta=a^{\alpha r-r-\beta}/C_0$ then we can take $\gamma=\log_a\theta$ in \eqref{ec1_demo_afirmacion2}.

If $\delta=0$, then $\Phi(t)=t^r$ and $a^{(\ell-k_m)\alpha r}/a^{(\ell-k_m)r}=a^{(\ell-k_m)(\alpha-1)r}$ and in this case we can take $\gamma=(\alpha-1)r$, and
 \eqref{ec1_demo_afirmacion2} is proved.
\end{proof}

\medskip

\section*{Acknowledgments} This paper is a constitutive part of my doctoral thesis, under the direction of Ph. D. Gladis Pradolini and Ph. D. Marilina Carena. I would like to specially thank to both of them for reading this manuscript and suggest me adequate changes in notation and redaction.


\providecommand{\bysame}{\leavevmode\hbox to3em{\hrulefill}\thinspace}
\providecommand{\MR}{\relax\ifhmode\unskip\space\fi MR }
\providecommand{\MRhref}[2]{%
  \href{http://www.ams.org/mathscinet-getitem?mr=#1}{#2}
}
\providecommand{\href}[2]{#2}

\end{document}